\documentclass[11pt, reqno]{amsart}
\usepackage{amsmath, amsthm, amscd, amsfonts, amssymb, graphicx}
\usepackage{ragged2e}
\usepackage[bookmarksnumbered, colorlinks, plainpages]{hyperref}
\usepackage{cite}
\newtheorem{theorem}{Theorem}[section]
\newtheorem{lemma}[theorem]{Lemma}

\newtheorem{Result}[theorem]{Result}
\theoremstyle{definition}

\theoremstyle{remark}
\newtheorem{remark}[theorem]{Remark}
\numberwithin{equation}{section}

\begin{document}
\setcounter{page}{1}

\centerline{}

\centerline{}

\title[]{Avoidance Criteria for Normal Holomorphic Curves on Complex Projective Space}

\author[Gopal Datt, Rahul Gogoi \MakeLowercase {and} Kushal Lalwani]{Gopal Datt, Rahul Gogoi \MakeLowercase {and} Kushal Lalwani}

\address{Department of Mathematics, Babasaheb Bhimrao Ambedkar University, Lucknow, India.}
\email{\textcolor[rgb]{0.00,0.00,0.84}{ggopal.datt@gmail.com, gopal.du@gmail.com}}
\address{Department of Mathematics, University of Delhi, Delhi, India.}
\email{\textcolor[rgb]{0.00,0.00,0.84}{rgogoi1729@gmail.com, rgogoi@maths.du.ac.in}}
\address{Department of Mathematics, Manav Rachna University, Faridabad, India}
\email{\textcolor[rgb]{0.00,0.00,0.84}{lalwani.kushal@gmail.com, kushallalwani@mru.edu.in}}

\subjclass[2020]{Primary 32A19; Secondary 32H30, 32H12}
\keywords{holomorphic curves, normality, complex projective space, avoidance criteria}

\begin{abstract}
We establish an avoidance criterion for families of holomorphic curves from the unit disk in complex plane to the complex
projective space that omit sufficiently many moving hypersurfaces in pointwise general position.  Furthermore, we 
study families of holomorphic curves that share hyperplanes and derive analogous normality conditions in this context.
\end{abstract}
\maketitle
\section{Introduction and Main Results}\label{Intro}
Recall that a family $\mathcal{F}$ of meromorphic functions on a
domain $D \subset \mathbb{C}$ is {\it normal} if given any sequence 
in $\mathcal{F}$ has a subsequence which converges compactly on $D$ 
to a meromorphic map into 
$\widehat{\mathbb{C}}=\mathbb C\,\cup\,\{\infty\}$. 
The changeover in focus from the families of meromorphic functions 
to individual functions occurred for the first time in the work of 
Yosida in 1934 \cite{yos-oacomf-1934}. This was subsequently improved upon by 
Noshiro \cite{nos-ctttomfituc-1938}. In 1957, Lehto and Virtanen established the 
modern definition of normal meromorphic functions, characterizing 
them through the properties of conformal invariance and the 
spherical derivative \cite{leh-bbanmf-1957}. Noshiro \cite{nos-ctttomfituc-1938} gave a 
characterization for a function to be normal, it states that a 
function $f$ meromorphic in $\mathbb{D}=\{z\in\mathbb C:|z|<1\}$ is 
{\bf normal} if and only if 
$$\sup_{z\in\mathbb{D}}(1-|z|^2)f^\#(z)<\infty,$$ 
where $\displaystyle f^\#(z)=\frac{|f'(z)|}{1+|f(z)|^2}$ denotes the spherical derivative of $f$. 
\smallskip

In \cite{ere-nhcfprtps-2007}, Eremenko generalized the concept of normality 
to holomorphic curves $f: \mathbb{D} \to \mathbb{P}^n(\mathbb{C})$, 
extending the theory beyond classical meromorphic functions to 
higher-dimensional complex projective spaces. 
\smallskip

Determining the conditions under which a meromorphic function or a 
family of such functions is {\bf normal} has been a central problem 
in complex analysis. Classically, this question is addressed 
through the criteria of value omission.
\smallskip

A notable contribution in this direction was made by Xu and Qiu \cite{qiu-aacfnf-2011}, who established a criterion based on 
the avoidance of three distinct functions defined on $\mathbb D$.

\begin{Result}[{\cite[Theorem 1]{qiu-aacfnf-2011}}] \label{R-Xu_Qiu_AC} 
Let $f$ be a meromorphic function in $\mathbb{D}$. Suppose that $\varphi_1$, $\varphi_2$ and $\varphi_3$ be three 
$\widehat{\mathbb C}$-valued meromorphic functions in $\mathbb D$ 
and are continuous on $\overline{\mathbb{D}}:=
\{z\in\mathbb C:|z|\leq1\}$ such that 
$\varphi_u(z) \neq \varphi_v(z)\ (1 \leq u < v \leq 3)$ 
on the unit circle $\partial\mathbb D=\{z\in\mathbb C:|z|=1\}$. 
If $f(z)\neq\varphi_u(z)$ for all $z\in\mathbb{D}$ and $u=1,2,3$, 
then f is normal.
\end{Result}

Yang \cite{yang-anotacfnf-2020} extended Result~\ref{R-Xu_Qiu_AC} by demonstrating that the 
local avoidance of three continuous boundary-distinct functions not 
only implies the normality of a single function but ensures the 
uniform normality of a family $\mathcal{F}$, as characterized by 
the bounded spherical derivative in the following result.

\begin{Result}[{\cite[Theorem 1.1]{yang-anotacfnf-2020}}] \label{R-Yang_AC} 
Let $\varphi_1$, $\varphi_2$ and $\varphi_3$ be three $\widehat{\mathbb C}$-valued meromorphic functions in 
$\mathbb D$ and are continuous on the $\overline{\mathbb D}$ where $\varphi_u(z)\neq\varphi_v(z)\ (1 \leq u < v \leq 3)$ on 
$\partial\mathbb D$. Let $\mathcal F$ be a family of meromorphic 
functions such that $f\neq\varphi_u$ on $\mathbb D$ for $u=1,2,3$ 
and $f\in\mathcal F$. 
Then there is a positive constant $M$ such that $$(1-|z|^2)f^\#(z)\leq M$$
for each $z\in\mathbb D$ and $f\in\mathcal F$.
\end{Result}
Subsequently Ahamed and Mandal in \cite{ahamed-cponmanhm-2023} gave 
a more general version of the above result. 
\begin{Result}[{\cite[Theorem 2.1]{ahamed-cponmanhm-2023}}] \label{R-Ahamed_Mandal_AC} 
Let $\varphi_1,\varphi_2,\varphi_3$ be three $\widehat{\mathbb C}$-valued meromorphic functions in $\mathbb D$ and continuous 
on $\overline{\mathbb D}$, such that $\varphi_u(z)\neq\varphi_v(z)\ (1 \leq u < v \leq 3)$ on $\partial\mathbb D$. 
Let $\mathcal F$ be a family of meromorphic functions where $f\neq\varphi_u$ on $\mathbb D$ for $u=1,2,3$ and $f\in\mathcal F$.
Then there is an $M>0$ such that $$\left(\frac{a-b\,|z|^2}{c+d\,|z|^2}\right)f^\#(z)\leq M$$ for each $z\in \mathbb D$ and $f\in\mathcal F$, where $a,b,c,d\in \mathbb R$ and $abc\neq0$.
\end{Result}

\begin{remark}
Result~\ref{R-Ahamed_Mandal_AC} requires additional constraints for mathematical rigor. Specifically, the condition $c + d|z|^2 \neq 0$ is required for the term $\left(\frac{a-b|z|^2}{c+d|z|^2}\right)f^\#(z)$ to remain well-defined and bounded. Furthermore, the identity $a - b|z|^2 = (\sqrt{a} + \sqrt{b}|z|)(\sqrt{a} - \sqrt{b}|z|)$ employed in the original proof is valid only under the assumption $ab \geq 0$. 
\end{remark} 

Keeping these requirements in mind, we now present an improved 
version of the result for the sake of completeness and mathematical 
rigor. Subsequently, we establish an analogue of 
Result~\ref{R-Ahamed_Mandal_AC} in the setting of projective 
spaces. Specifically, we consider the normality of a family of 
holomorphic curves from $\mathbb{D}$ into 
$\mathbb{P}^n(\mathbb{C})$ that omit a collection of moving 
hypersurfaces in pointwise general position.

\begin{theorem}\label{T-ACFMHSO}
Let $\{Q_i(z)\}_{i=1}^{2n+1}$ be moving hypersurfaces in pointwise general position in $\mathbb P^n(\mathbb C)$. Let 
$\mathcal F$ be a family of holomorphic curves from $\mathbb D$ to $\mathbb P^n(\mathbb C)$ such that $f(\mathbb D)\subset \mathbb P^n(\mathbb C)\setminus \displaystyle\bigcup_{i=1}^{2n+1} Q_i(z)$, 
for all $f\in\mathcal F$, $z\in\mathbb D$ and $i=1,2,\dots,2n+1$. Then there exists $M>0$ such that 
\begin{align*}\left(\frac{a-b\,|z|^2}{c+d\,|z|^2}\right)\|f'(z)\|_{FS}\leq M, \label{ACFMHSO} \tag{\ref*{T-ACFMHSO}} \end{align*}
for each $z\in \mathbb D$ and $f\in\mathcal F$, where $a,b,c,d\in\mathbb R$, $abc\neq 0$, $ab\geq 0$ and $c+d\,|z|^2\neq0$, for all $z\in\overline{\mathbb D}$.
\end{theorem}

Note that the object $\|f'(z)\|_{FS}$ in the statement of the above theorem is the Fubini\,--\,Study derivative of $f$ at $z$ whose precise definition will be discussed in Section~\ref{Prelim}.
\smallskip

In connection with the aforementioned result, the following natural question arises :
\begin{itemize}
\item Whether Theorem~\ref{T-ACFMHSO} can be extended to the case where the family of holomorphic curves shares a collection of hypersurfaces or hyperplanes, rather than merely omitting them.
\end{itemize}

We end this section with the following theorem, which answers the above question in the affirmative for the case where hyperplanes are shared by any two holomorphic curves of the family.

\begin{theorem}\label{T-ACFFHPS}
Let $\{H_i\}_{i=1}^{2n+1}$ be hyperplanes in general position in $\mathbb P^n(\mathbb C)$. Let $\mathcal F$ be a family of 
holomorphic curves from $\mathbb D$ to $\mathbb P^n(\mathbb C)$ such that $f^{-1}(H_i)=g^{-1}(H_i)$ on $\mathbb D$, for all 
$f,g\in\mathcal F$ and $i=1,2,\dots,2n+1$. Then there exists $M>0$ such that 
\begin{align*}\left(\frac{a-b\,|z|^2}{c+d\,|z|^2}\right)\|f'(z)\|_{FS}\leq M, \label{ACFFHPS} \tag{\ref*{T-ACFFHPS}} \end{align*}
for each $z\in \mathbb D$ and $f\in\mathcal F$, where $a,b,c,d\in\mathbb R$, $abc\neq 0$, $ab\geq 0$ and 
$c+d\,|z|^2\neq0$, for all $z\in\overline{\mathbb D}$.
\end{theorem}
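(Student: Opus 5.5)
The plan is to reduce (\ref{ACFFHPS}) to a uniform Marty-type estimate and then deal with the weight by a compactness argument near the boundary. The Marty-type estimate is
\begin{equation*}
(1-|z|^2)\|f'(z)\|_{FS}\le M_0\qquad (z\in\mathbb D,\ f\in\mathcal F).
\end{equation*}
I do not yet have an argument covering every admissible choice of $a,b,c,d$. The case in which $w(z):=\frac{a-b|z|^2}{c+d|z|^2}$ stays positive up to $\partial\mathbb D$ is left open below, and I suspect the statement may need an extra hypothesis there.

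\textbf{Step 1: the Marty-type estimate.} Argue by contradiction. If $M_0$ fails to exist, there are $f_k\in\mathcal F$ and $z_k\in\mathbb D$ with $(1-|z_k|^2)\|f_k'(z_k)\|_{FS}\to\infty$. Compose with the disk automorphisms $\phi_k(\zeta)=\frac{\zeta+z_k}{1+\overline{z_k}\zeta}$ and set $g_k=f_k\circ\phi_k$. Then $\|g_k'(0)\|_{FS}=(1-|z_k|^2)\|f_k'(z_k)\|_{FS}\to\infty$, so $\{g_k\}$ is not normal at $0$.

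The rescaling lemma for holomorphic curves into $\mathbb P^n(\mathbb C)$ (Zalcman--Aladro--Krantz type) then gives points $\zeta_k\to\zeta_0$ and radii $\rho_k\to0^+$. Along a subsequence, $g_k(\zeta_k+\rho_k\xi)\to g(\xi)$ locally uniformly on $\mathbb C$, where $g$ is a nonconstant entire curve.

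The sharing hypothesis passes through the composition: $g_k^{-1}(H_i)=\phi_k^{-1}\bigl(f_k^{-1}(H_i)\bigr)$. Since $f_k^{-1}(H_i)=f_1^{-1}(H_i)$ for all $k$, these preimage sets are all pullbacks of one fixed discrete set. Using Hurwitz's theorem for the functions $\langle g_k,H_i\rangle$, I would show that $g(\mathbb C)$ either omits $H_i$ or lies in it, for every $i$. Green's / Fujimoto's theorem for $2n+1$ hyperplanes in general position then forces $g$ to be constant, which is the contradiction.

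The delicate point in this step is that the Möbius pullbacks of a fixed discrete set may accumulate. I would handle it by showing that the rescaled preimages of each $H_i$ either escape to $\infty$ or converge to a set that is forced to be empty by the sharing with the fixed curve $f_1$, whose derivative is bounded on compacta.

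\textbf{Step 2: passing to the weight, split by the boundary value.} The value of the weight on $\partial\mathbb D$ is $w(1)=\frac{a-b}{c+d}$, and $w$ is continuous on $\overline{\mathbb D}$ because $c+d|z|^2\ne0$ there.

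\emph{Case $w(1)<0$.} Choose $r<1$ with $w<0$ on $r\le|z|<1$; there (\ref{ACFFHPS}) holds trivially. On $|z|\le r$, Step 1 gives $\|f'\|_{FS}\le M_0/(1-r^2)$ and $w$ is bounded, which finishes this case.

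\emph{Case $w(1)=0$, i.e.\ $a=b$.} Then $w(z)=a(1-|z|^2)/(c+d|z|^2)$. The denominator has modulus at least some $\delta>0$ on $\overline{\mathbb D}$, so Step 1 gives the bound $|a|M_0/\delta$.

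\emph{Case $w(1)>0$.} This is the main obstacle. Here $w$ is bounded below by a positive constant on an annulus near $\partial\mathbb D$. The inequality would therefore need $\sup_{\mathcal F}\|f'\|_{FS}$ to be bounded up to the boundary, which Step 1 does not supply. My first attempt would be to use $ab\ge0$ to factor
\begin{equation*}
a-b|z|^2=(\sqrt{|a|}-\sqrt{|b|}\,|z|)(\sqrt{|a|}+\sqrt{|b|}\,|z|)
\end{equation*}
and compare the first factor with $1-|z|$. However, this comparison yields a linear vanishing rate only when $|a|=|b|$. I therefore expect that this case either needs an extra hypothesis, such as $w(1)\le0$, or a test against a simple family (for instance a single curve whose Fubini--Study derivative blows up at $\partial\mathbb D$) to determine whether the claimed $M$ exists. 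I would check that example before attempting any further argument.
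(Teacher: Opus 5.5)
Your Step 1 is false, so this gap cannot be closed. The sharing hypothesis is vacuous for a one-element family. Step 1 would therefore say that every holomorphic curve $\mathbb D\to\mathbb P^n(\mathbb C)$ is normal. For a counterexample, take $n=1$, let $H_1,H_2,H_3$ be the points $0,1,\infty$, and put $\mathcal F=\{f\}$ with $f(z)=e^{w^2}$, $w=\frac{1+z}{1-z}$. Since $(1-|z|^2)|w'(z)|=2\,\mathrm{Re}\,w$,
\[ (1-|z|^2)f^\#(z)=2\,\mathrm{Re}(w)\,\frac{2|w|\,|e^{w^2}|}{1+|e^{w^2}|^2}, \]
which equals $\sqrt2\,t^2$ on the ray $w=te^{i\pi/4}$, $t>0$. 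This one curve also refutes the theorem itself whenever the weight is positive near $\partial\mathbb D$. If $a=b$ and $a/(c+d|z|^2)>0$ (e.g.\ $a=b=c=1$, $d=0$), the weight is at least $\delta(1-|z|^2)$ for some $\delta>0$, so the bound fails. If $w(1)>0$, then $f^\#$ is unbounded near $\partial\mathbb D$ (it is bounded on compacta and $f$ is not normal), while the weight there is at least some $\delta>0$. So your suspicion about $w(1)>0$ is correct. However, the single-curve test you propose for that case should also have been applied to Step 1. Your plan to control the M\"obius pull-backs of $E_i=f_1^{-1}(H_i)$ by bounds on $f_1$ over compacta cannot work. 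These sets may accumulate at $\partial\mathbb D$ (here $f^{-1}(1)$ accumulates at $z=1$), where no such bound is available. The renormalized limit may then meet $H_i$ infinitely often.

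The paper's proof passes over exactly this point. It rescales by $s_\nu=\frac{\sqrt a-\sqrt b\,|z_\nu|}{c+d|z_\nu|^2}$ and asserts that for $z_0\in\partial\mathbb D$ ``the same procedure'' as in the interior applies. For $w(1)>0$ this breaks immediately. The factor $s_\nu$ does not tend to $0$, so $g_\nu$ is defined only on $|z|<(1-|z_\nu|)/s_\nu\to 0$. Neither Marty's criterion nor the Zalcman-type lemma can then be applied at the origin. For $a=b$ the rescaling is harmless, but the Hurwitz step fails. That step needs a bound, uniform in $\nu$, on the number of zeros of $\langle\mathbf G_\nu,H_i\rangle$ in each compact set. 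Such a bound holds for $z_0\in\mathbb D$, because a small disc about $z_0$ meets the discrete set $E_i$ at most in $z_0$. It fails when $E_i$ accumulates at $z_0\in\partial\mathbb D$. What survives is the case where the weight is $\le 0$ near $\partial\mathbb D$, which includes your case $w(1)<0$. There you should invoke normality of $\mathcal F$ on $\mathbb D$ plus Marty's criterion, not Step 1. Normality does hold, by the interior rescaling argument. Since $f(z_0)$ lies in at most $n$ of the $H_i$, at least $n+1$ indices give that $G$ omits $H_i$ or $G(\mathbb C)\subset H_i$. Each remaining index gives at most one zero of $\langle\mathbf G,H_i\rangle$ (or $G(\mathbb C)\subset H_i$). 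The lemma of Liu et al.\ then forces $G$ to be constant.
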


\section{Preliminaries}\label{Prelim}

 This section is devoted to a detailed elaboration of the concepts and terminology introduced in Section~\ref{Intro}, as well as the introduction of certain fundamental notions required for the subsequent proofs. For a more comprehensive background and detailed exposition, the interested reader is referred to\cite{ere-nhcfprtps-2007, ere-bcoh-2008, pang-axostfnf-2015, quang-wnaqfohc-2018, li-goptwmh-2021, deth-nfommoscvfmhiacps-2015, yang-anotacfnf-2020, yang-hmitcpswmh-2021, fuji-ofommitcps-1974}.\smallskip

Let $\mathbb{P}^n(\mathbb{C})$ denote the complex $n$-projective space; fixing the homogeneous coordinates $[u_0:u_1:\cdots:u_n]$
in $\mathbb P^n(\mathbb C)$ and letting $f : \mathbb{C} \to \mathbb{P}^n(\mathbb{C})$ be a holomorphic curve represented by 
$f(z) = [f_0(z) : f_1(z) : \dots : f_n(z)]$, 
we now briefly outline the notions of the Fubini\,--\,Study distance and norm, along with the definition of the Fubini\,--\,Study derivative (see \cite{ere-nhcfprtps-2007, quang-wnaqfohc-2018, yang-anotacfnf-2020, yang-hmitcpswmh-2021, fuji-ofommitcps-1974} for more details).

\begin{enumerate}
    \item The \emph{Fubini\,--\,Study distance} between any two points in $\mathbb P^n(\mathbb C)$ is given as
\begin{equation*}
d_{FS}(p,q) = \frac{\sum_{k,l=0}^n |p_k q_l - p_l q_k|}{\sqrt{\sum_{k=0}^n |p_k|^2} \, \sqrt{\sum_{l=0}^n |q_l|^2}},
\end{equation*}
for $p = [p_0 : p_1 : \dots : p_n]$ and $q = [q_0 : q_1 : \dots : q_n]$ in $\mathbb{P}^n(\mathbb{C})$. \smallskip
    \item The \emph{Fubini\,--\,Study norm} of $f$ at $z$ is given as \begin{equation*}
\|f(z)\|_{FS} := \left( \sum_{k=0}^n |f_k(z)|^2 \right)^{1/2}.
\end{equation*}
    \item The \emph{Fubini\,--\,Study derivative} of $f$ at $z$ is given as \begin{equation*}
\|f'(z)\|_{FS} = 
\frac{\left( \sum_{0 \le l_1 < l_2 \le n} 
\left| f_{l_1}'(z) f_{l_2}(z) - f_{l_1}(z) f_{l_2}'(z) \right|^2 \right)^{1/2}}
{\sum_{k=0}^n |f_k(z)|^2}.
\end{equation*}
\end{enumerate}\smallskip

We next turn our focus to the notions of representation of a holomorphic curve, normal holomorphic curves, fixed 
hypersurfaces and moving hypersurfaces (see \cite{ere-nhcfprtps-2007, ere-bcoh-2008, deth-nfommoscvfmhiacps-2015, yang-anotacfnf-2020, yang-hmitcpswmh-2021} for more details).

A \emph{representation} of a holomorphic curve $f : \mathbb{C} \to \mathbb{P}^n(\mathbb{C})$ is a holomorphic map 
$\mathbf f:\mathbb C\to\mathbb C^{n+1}\setminus\{0\}$ such that $\mathbf{f}(z)=(f_0(z),f_1(z), \dots, f_n(z))$, where for every $z\in\mathbb C$, $f(z)=[f_0(z):f_1(z):\cdots:f_n(z)]$. That is, $\mathbf f$ is a holomorphic lift of $f$ with respect to the canonical projection $\pi:\mathbb C^{n+1}\setminus\{0\}\to\mathbb P^n(\mathbb C)$, $\pi(u_0,u_1,\dots,u_n)=[u_0:u_1:\cdots:u_n]$ such that $f=\pi\,\circ\,\mathbf f$.
\smallskip

A holomorphic curve $f:\mathbb{D} \to \mathbb P^n(\mathbb C)$ is \emph{normal} if
\begin{equation*}\sup_{z\in \mathbb{D}}(1-|z|^2)\|f'(z)\|_{FS}<\infty.\end{equation*}

A \textit{hypersurface} of degree $d\geq 1$ in $\mathbb{P}^n(\mathbb{C})$ is defined by
\begin{equation*}\left\{ [u_0:u_1:\cdots:u_n]\in\mathbb P^n(\mathbb C)\ |\ Q(u_0,u_1\ldots,u_n)= \sum_{I \in \tau_d} \alpha_I \, u^I = 0\right\},\end{equation*}
where each $\alpha_I$ are constants, $\tau_d = \left\{ (l_0,\ldots,l_n) \in \mathbb Z_{+}^{n+1} : \sum_{k=0}^n l_k= d \right\}$ 
and $u^I = \prod_{k=0}^n u_k^{l_k}$ for 
$I = (l_0,\ldots,l_n)$. \smallskip

The norm of $Q$ is given by
$$\|Q\|=\left( \sum_{I \in \tau_d} |\alpha_I|^2 \right)^{1/2}.$$\smallskip

Restricting to the case that $\|Q\|\neq 0$. Let $Q_1,Q_2,\ldots,Q_{\tilde q}$ $(\tilde q \geq t+1)$ be hypersurfaces 
in $\mathbb P^n(\mathbb{C})$ and $X\subseteq\mathbb P^n(\mathbb C)$ be a closed set.
The hypersurfaces $Q_1,Q_2,\ldots,Q_{\tilde q}$ are said to be in $t$-
subgeneral position with respect to $X$, if given any $1\le l_0\le\cdots\le l_t\le \tilde q$, \begin{equation*}
    X\,\cap\,Q_{l_0}\,\cap\cdots\,\cap\,Q_{l_t}=\varnothing. 
\end{equation*}
Also, the hypersurfaces $Q_1,Q_2,\dots,Q_{\tilde q}$ are said to be in general position if they are in $n$-subgeneral position with respect to $\mathbb P^n(\mathbb C)$.\smallskip

A \emph{moving hypersurface} of degree $d\geq 1$ in $\mathbb{P}^n(\mathbb{C})$
is defined by,
\begin{equation*}\left\{[u_0:u_1:\cdots:u_n]\in\mathbb P^n(\mathbb C)\ |\ Q(z;u_0,u_1,\ldots,u_n) \;=\; 
\sum_{I \in \tau_d} \alpha_I(z) \, u^I=0,\ z \in D\right\}\end{equation*}
where each coefficient $\alpha_I(z)$ is a holomorphic functions 
on the planar domain ${D}$ and not all $\alpha_I(z)$ vanish simultaneously for any 
$z\in D$. \smallskip
The norm of $Q$ at each point is given by
$$\|Q(z)\| = \Big( \sum_{I \in \tau_d} |\alpha_I(z)|^2 \Big)^{1/2},\ z\in D.$$\smallskip

Restricting to the case that $\|Q(z)\|\neq 0$. Let $Q_1(z),Q_2(z),\ldots,Q_{\hat q}(z)$ $(\hat q \geq n+1)$ be moving hypersurfaces in $\mathbb P^n(\mathbb{C})$, defined on a domain ${D}$. 
For each $z \in {D}$, we set
\begin{equation*}
\mathcal D(Q_1,Q_2,\ldots,Q_{\hat q})(z) \;=\; 
\prod_{1 \leq l_0 < \cdots < l_{n} \leq n}
\;\;\inf_{\substack{\mathbf u\in \mathbb{C}^{n+1}\\ \|\mathbf u\|=1\ \ }}
\left(\sum_{k=0}^{n}|Q_{l_k}(z;\mathbf u)|^2  \right).
\end{equation*}
If $\mathcal D(Q_1,Q_2,\ldots,Q_{\hat q})(z) > 0$ for some $z \in D$, 
we say that the hypersurfaces $Q_1(z),Q_2(z),$ $\ldots,Q_{\hat q}(z)$ are in (weakly)
general position or pointwise general position at $z$.
\begin{remark}
Hypersurfaces of degree $1$ are referred to as hyperplanes. In what follows,  hyperplanes will be denoted by $H$.
\end{remark}

Let $H \subset \mathbb{P}^n(\mathbb{C})$ be a hyperplane defined by
\begin{equation*}
H = \left\{ [u_0 : u_1 : \cdots : u_n] \in \mathbb{P}^n(\mathbb{C}) \; : \; \sum_{k=0}^n a_k u_k = 0 \right\},
\end{equation*}
where the coefficients $a_0, \dots, a_n \in \mathbb{C}$ are not all zero.
Set
$\langle \mathbf{f}, H \rangle := \sum_{k=0}^n a_k f_k.$
which is a holomorphic function on $D$ and its zero set coincides with $\{z\in D: f(z)\in H\}$.

\section{Essential Lemmas}
Before proceeding to the proofs of the main theorems, we present a collection of preliminary lemmas that will be utilized throughout the subsequent sections. \smallskip

The lemma below of Thai et al. is a generalization of Zalcman's rescaling lemma in higher dimensions, but formulated specifically for complex projective space. It seeks to aid \cite[Theorem 3.1]{ala-acfnicn-1991} of 
Aladro and Krantz, but they proved it under weaker assumptions and missed to consider when the target manifold can be non-compact.  
\begin{lemma}[{\cite[Theorem 2.5]{thai-fonmiscvahocs-2003}}] \label{L-Thai_Zalcman_Projective} 
Let $\mathcal{F}$ be a family of holomorphic mappings from a domain $\Omega\subset \mathbb{C}^m$ into $\mathbb{P}^n(\mathbb{C})$, where $m\in\mathbb N$. The family 
$\mathcal{F}$ is not normal on $\Omega$ if and only if there exist sequences $\{f_{\nu}\}_{\nu=1}^\infty \subset \mathcal{F}$, $\{z_{\nu}\}_{\nu=1}^\infty \subset \Omega$ with 
$z_{\nu} \to z_0 \in \Omega$ and $\{\rho_{\nu}\}$ with $\rho_{\nu}\searrow 0$ such that
$h_{\nu}(\zeta) := f_{\nu}(z_{\nu} + \rho_{\nu} \zeta)$
converges locally uniformly on $\mathbb{C}^m$ to a non-constant holomorphic mapping $h$ of $\mathbb{C}^m$ into 
$\mathbb{P}^n(\mathbb{C})$.
\end{lemma}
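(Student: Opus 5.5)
The plan is to prove both directions of Lemma~\ref{L-Thai_Zalcman_Projective} through a Marty-type criterion for maps into $\mathbb P^n(\mathbb C)$. For a holomorphic map $f:\Omega\to\mathbb P^n(\mathbb C)$ and $z\in\Omega$, set
$$f^\#(z):=\sup_{\xi\in\mathbb C^m,\ \|\xi\|=1}\|df(z)(\xi)\|_{FS},$$
the Fubini--Study length of the differential. For $m=1$ this is exactly $\|f'(z)\|_{FS}$ as defined in Section~\ref{Prelim}. The first step is the generalized Marty theorem: \emph{$\mathcal F$ is normal on $\Omega$ if and only if $\{f^\#: f\in\mathcal F\}$ is locally uniformly bounded on $\Omega$.}

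\emph{Proof of the Marty criterion.} Since $\mathbb P^n(\mathbb C)$ is compact, a local bound on $f^\#$ makes $\mathcal F$ equicontinuous with respect to $d_{FS}$. Arzel\`a--Ascoli then gives locally uniformly convergent subsequences. The limit is holomorphic: locally, near a point of the limit, every map eventually lands in a fixed affine chart, where we have uniform convergence of holomorphic vector-valued functions. Conversely, if $\mathcal F$ is normal, locally uniform convergence of holomorphic maps into a complex manifold gives $C^1$ convergence, by Cauchy estimates in charts. This bounds $f^\#$ on compacta: if the bound failed, some sequence would have $f_\nu^\#(w_\nu)\to\infty$ with $w_\nu\to w_0$, contradicting $C^1$ convergence of a subsequence.

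\emph{Easy direction} (a rescaling exists $\Rightarrow$ $\mathcal F$ is not normal). Suppose $\mathcal F$ were normal. Then on a closed ball $\overline B(z_0,r)\subset\Omega$ we have $f_\nu^\#\le C$ for all $\nu$. The chain rule gives
$$h_\nu^\#(\zeta)=\rho_\nu f_\nu^\#(z_\nu+\rho_\nu\zeta)\le C\rho_\nu\to0$$
uniformly on compact subsets of $\mathbb C^m$. Since $h_\nu\to h$ in $C^1$ on compacta, we get $h^\#\equiv0$, so $h$ is constant, which is a contradiction.

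\emph{Hard direction} (not normal $\Rightarrow$ a rescaling exists). By the Marty criterion there are $r>0$, a closed ball $\overline B(z^*,r)\subset\Omega$, and $f_\nu\in\mathcal F$ with $\sup_{B(z^*,r)} f_\nu^\#=\infty$ along the sequence. Apply Zalcman's trick as follows.
\begin{enumerate}
\item Let $M_\nu:=\max_{|z-z^*|\le r}(r-|z-z^*|)f_\nu^\#(z)$. If necessary, first shrink $r$ slightly so that these maxima are finite, attained, and still tend to $\infty$.
\item Choose $z_\nu$ where the maximum is attained, and pick a unit vector $\xi_\nu$ realizing $f_\nu^\#(z_\nu)$.
\item Set $\rho_\nu:=1/f_\nu^\#(z_\nu)$. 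Then
$$\rho_\nu\le \frac{r-|z_\nu-z^*|}{M_\nu}\to0.$$
\end{enumerate}
Pass to a subsequence so that $z_\nu\to z_0\in\overline B(z^*,r)$. In fact $z_0$ lies in the open ball, because $(r-|z_\nu-z^*|)/\rho_\nu=M_\nu\to\infty$.

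For fixed $R>0$ and $|\zeta|\le R$, once $\nu$ is large the maximality of $z_\nu$ gives
$$h_\nu^\#(\zeta)=\rho_\nu f_\nu^\#(z_\nu+\rho_\nu\zeta)\le \frac{r-|z_\nu-z^*|}{\,r-|z_\nu-z^*|-\rho_\nu R\,}\to1.$$
Hence the family $\{h_\nu\}$ has locally bounded $h^\#$ on $\mathbb C^m$, so it is normal by the Marty criterion. A subsequence therefore converges locally uniformly, and in $C^1$, to a holomorphic map $h:\mathbb C^m\to\mathbb P^n(\mathbb C)$. The normalization $h_\nu^\#(0)=1$ passes to the limit, giving $h^\#(0)=1$. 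So $h$ is nonconstant.

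I expect the main technical obstacle to be the Marty criterion in this projective, several-variable setting. Specifically, we must check that locally uniform convergence in $d_{FS}$ implies convergence of the differentials in $\|\cdot\|_{FS}$. I would handle this by working in affine charts $\{u_j\neq0\}$: uniform convergence there gives locally uniform convergence of the holomorphic coordinate functions, hence of their derivatives by Cauchy's estimates. The Fubini--Study metric is smooth and comparable to the Euclidean metric on compact subsets of each chart, so this transfers to $\|\cdot\|_{FS}$. A secondary, more bookkeeping-type point is that the maxima defining $M_\nu$ are attained and tend to infinity, which is why we shrink the ball slightly at the start of the hard direction.
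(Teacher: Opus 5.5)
The paper gives no proof of this lemma; it imports it from Thai et al. Your route is the standard one for a compact target: a Marty-type criterion for $f^\#$, followed by Zalcman's maximization of $(r-|z-z^*|)f_\nu^\#(z)$. Your outline is sound, and both directions work, but two steps in the hard direction need small repairs.

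First, shrinking $r$ cannot make $M_\nu\to\infty$. If $f_\nu^\#$ blows up only near the boundary sphere of $B(z^*,r)$, the weight suppresses it, and a smaller concentric ball may miss the blow-up entirely. Instead, take a point $w_0\in\Omega$ at which $\{f^\#: f\in\mathcal F\}$ is unbounded on every neighbourhood; such a point exists because local uniform boundedness fails. Choose $r$ with $\overline B(w_0,r)\subset\Omega$ and $f_\nu$ with $\sup_{B(w_0,r/2)}f_\nu^\#\to\infty$. Then $M_\nu\ge\frac r2\sup_{B(w_0,r/2)}f_\nu^\#\to\infty$. Equivalently, you could enlarge the radius rather than shrink it.

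Second, $M_\nu=(r-|z_\nu-z^*|)/\rho_\nu\to\infty$ does not force $z_0$ into the open ball, since $r-|z_\nu-z^*|$ may tend to $0$ more slowly than $\rho_\nu$. For example, take $m=1$ and $f_\nu(z)=\nu^3(z-p_\nu)$ with $r-|p_\nu-z^*|=1/\nu$. Then $z_\nu$ lies within $1/\nu$ of $p_\nu$, so $z_0$ lies on the sphere. You do not need this claim, because $z_0\in\overline B(z^*,r)\subset\Omega$ already. What $M_\nu\to\infty$ actually gives is $\rho_\nu R<r-|z_\nu-z^*|$ for large $\nu$, and that is exactly what your bound on $h_\nu^\#$ uses.

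Two minor points remain. Pass to a subsequence to make $\rho_\nu$ decreasing, as the statement requires $\rho_\nu\searrow 0$. The $h_\nu$ are defined on varying domains exhausting $\mathbb C^m$, so get convergence on all of $\mathbb C^m$ by applying the Marty criterion on each ball $B(0,R)$ for $\nu\ge\nu_R$ and taking a diagonal subsequence.
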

Eremenko obtained a Picard-type theorem \cite[Theorem 1]{ere-apttfhc-1999} for holomorphic curves into complex projective 
spaces where the curve avoids a sufficiently generic set of hypersurfaces; this forces the holomorphic curve to be a constant mapping. 
\begin{lemma}[{\cite[Theorem 1]{ere-apttfhc-1999}}] \label{L-Ere_omitting_hypersurface_general_position_implies_constant} 
Let $M \subseteq \mathbb{P}^n(\mathbb{C})$ be a closed subset  and say $\{Q_{j}\}_{j=1}^{2t+1}$ be a set of hypersurfaces in 
$\mathbb{P}^n(\mathbb{C})$ where
\begin{equation*}
M \cap \bigcap_{j \in I} Q_{j} = \varnothing
\quad \text{for every } I \subset \{1,2,\ldots,2t+1\},\ |I| = t+1.
\end{equation*}
Then any holomorphic mapping $f$ from $\mathbb{C}$ into
$M \setminus \bigcup_{j=1}^{2t+1} Q_{j}$ is constant.
\end{lemma}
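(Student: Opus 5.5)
Since all the $Q_j$ enter only through their zero sets, I would first replace each defining polynomial by a suitable power so that all $Q_j$ have a common degree $d$. The conclusion and the intersection hypothesis are then unchanged. Let $\mathbf f$ be a reduced representation of $f$, and put
\[
u=\log\|\mathbf f\|^{d},\qquad u_j=\log|Q_j(\mathbf f)| \quad (j=1,\ldots,2t+1).
\]
Then $u$ is subharmonic. Each $u_j$ is harmonic on $\mathbb C$, because $f$ omits $Q_j$ and so $Q_j(\mathbf f)$ never vanishes.

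The first step is a compactness estimate on $M$. The continuous function
\[
p\mapsto \max_{|I|=t+1}\,\min_{j\in I}\frac{|Q_j(p)|}{\|p\|^{d}}
\]
is well defined on the compact set $M$ and never vanishes there. Indeed, at any $p\in M$ at most $t$ of the $Q_j$ vanish by the hypothesis, so some $(t+1)$-subset consists of non-vanishing indices. Hence there is $C>0$ such that for every $z\in\mathbb C$:
\begin{itemize}
\item[(a)] $u_j(z)\le u(z)+C$ for all $j$;
\item[(b)] there are at least $t+1$ indices $j$ with $u_j(z)\ge u(z)-C$.
\end{itemize}

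The core step, and the one I expect to be the main obstacle, is to deduce from (a) and (b) that every difference $u_j-u_k$ is bounded on $\mathbb C$.

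Here the number $2t+1$ is used through the fact that any two $(t+1)$-subsets of $\{1,\ldots,2t+1\}$ intersect. Let $S(z)$ be the set of indices satisfying (b) at $z$. My first attempt would be the following.
\begin{itemize}
\item Show that $v:=\max_j u_j$ is subharmonic with $|v-u|\le C$.
\item For fixed $k$, consider the sets $E_k=\{z: k\in S(z)\}$. Argue that, for a harmonic $u_k$ and a subharmonic $v$ with $u_k\le v+C$, the complement region where $u_k$ lies far below $v$ cannot carry enough mass. The intended mechanism is a Phragm\'en--Lindel\"of or Nevanlinna-type estimate applied to $v-u_k$ on circles, using that the Riesz measure of $v$ is controlled by the majority condition.
\end{itemize}
If this direct potential-theoretic route stalls, the fallback is Eremenko's original strategy. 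One assumes $f$ is nonconstant, uses (a), (b) to compare the characteristic of $f$ with the counting and proximity data of the $u_j$, and derives a contradiction from the majority condition, which is essentially a second-main-theorem-type inequality.

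Once the differences are bounded, the harmonic functions $u_j-u_1$ are bounded on $\mathbb C$, hence constant by Liouville. It follows that $Q_j(\mathbf f)=c_j\,Q_1(\mathbf f)$ with constants $c_j\neq 0$.

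It then remains to show $f$ is constant. Consider the morphism
\[
\Phi=[Q_1:\cdots:Q_{2t+1}]\colon M\to\mathbb P^{2t},
\]
which is well defined because the $Q_j$ have no common zero on $M$. The image of $f$ lies in the single fibre $\Phi^{-1}([1:c_2:\cdots:c_{2t+1}])$. This fibre is finite. To see this, suppose a fibre contained an irreducible curve $V$. On $V$ all $Q_j$ are proportional to $Q_1$. Since $V$ is a positive-dimensional projective variety, $Q_1$ vanishes at some point of $V$, and then every $Q_j$ vanishes there. That contradicts $M\cap Q_{j_1}\cap\cdots\cap Q_{j_{t+1}}=\varnothing$. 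Finally, a continuous map from the connected set $\mathbb C$ into a finite set is constant, so $f$ is constant.
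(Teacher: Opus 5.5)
The paper gives no proof of this lemma; it is quoted from Eremenko. So your argument has to stand on its own, and it has a genuine gap exactly where you flag it.

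\textbf{The core step is missing.} Your reductions are fine: the common degree $d$, the compactness bounds (a) and (b), and $|\max_j u_j-u|\le C$. But boundedness of every $u_j-u_k$ on $\mathbb C$ is only announced, and it is not a technical lemma. Bounded differences imply constancy (see below), and constancy trivially gives bounded differences, so this step \emph{is} the theorem.

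The mechanism you sketch is not yet an argument. You do not say what bound on the Riesz mass of $v$ the majority condition yields, and none is evident. Indeed (a) and (b) carry no information on any fixed disc: there all $2t+1$ harmonic functions may lie within $C$ of one another (e.g.\ $u_j=\mathrm{Re}(\alpha_j z)$ on $|z|<R$), while $\max_j u_j$ is not harmonic. The hypothesis only bites asymptotically, where the fixed error $C$ is negligible against the growth of $u$. The fallback to a second-main-theorem inequality is not readily available either, because $M$ is an arbitrary closed set with no algebraic structure.

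What is missing is a limiting argument that kills the error $C$. One way:
\begin{enumerate}
\item If $f$ is rational, the $Q_j(\mathbf f)$ are zero-free polynomials, hence constant. This contradicts (b), since $\|\mathbf f(z)\|\to\infty$.
\item If $f$ is transcendental, let $T(r)=\frac{1}{2\pi}\int_0^{2\pi}u(re^{i\theta})\,d\theta-u(0)$ and normalize $u_j(r_kz)/T(r_k)$ and $u(r_kz)/T(r_k)$. The radii $r_k$ must make these locally uniformly bounded above; this needs care, e.g.\ P\'olya peaks after reducing to finite order by a Brody--Zalcman reparametrization and an induction on $t$.
\item By Harnack, after passing to a subsequence, you get limits $v_j$ that are harmonic or $\equiv-\infty$. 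Then $v=\max_j v_j$ has $v(0)=0$ and mean $1$ on $|z|=1$, so $v$ is not harmonic. Condition (b) becomes: at every $z$, at least $t+1$ indices satisfy $v_j(z)=v(z)$.
\item Group identical $v_j$. Distinct harmonic functions agree only on a nowhere dense set, so off such a set the maximum is attained by exactly one group, which must have at least $t+1$ members. Since $2(t+1)>2t+1$, only one group can be that large. Hence $v$ coincides with a single harmonic function, a contradiction.
\end{enumerate}
Step 4 is where your remark that two $(t+1)$-subsets must meet genuinely enters.

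\textbf{The final step also needs repair.} For an arbitrary closed $M$ the fibre of $\Phi$ need not be finite. The alternative ``finite or containing an irreducible curve'' holds only for algebraic sets. For example, take $n=2$, $t=1$, $Q_1=u_1$, $Q_2=u_2$, $Q_3=u_1+u_2$ and $M=\{|u_1|^2+|u_2|^2\ge\varepsilon|u_0|^2\}$, which avoids the only pairwise intersection point $[1:0:0]$. The fibre over $[1:1:2]$ is the infinite set $\{[w:1:1]:\varepsilon|w|^2\le 2\}$, which contains no algebraic curve. If you instead take $V$ to be the Zariski closure of $f(\mathbb C)$, the same problem appears: $V$ need not lie in $M$, so the point of $V\cap Q_1$ you find need not be in $M$.

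The repair avoids algebraic geometry altogether. Once the differences are bounded, (b) gives $\|\mathbf f\|^d\le C'|Q_1(\mathbf f)|$. Since $Q_1(\mathbf f)$ has no zeros, choose an entire $h$ with $h^d=Q_1(\mathbf f)$. Then $\mathbf f/h$ is a bounded entire map into $\mathbb C^{n+1}$, hence constant by Liouville, and so $f$ is constant.
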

Also Liu et al. introduced a lemma \cite[Lemma 3.7]{liu-ofommitcps-2016} to prove that the holomorphic map into the projective space would 
be rational if a definite number of hyperplanes which are in subgeneral position, were either missed only finitely many times
or had the image of the map lying on them. The authors subsequently gave a lemma \cite[Lemma 3.8]{liu-ofommitcps-2016} to 
prove that the holomorphic map is constant if a large number of the mentioned hyperplanes either omit the image or contain it.
\begin{lemma}[{\cite[Lemma 3.8]{liu-ofommitcps-2016}}] \label{L-Liu_hyperplanes_subgeneral_position_some_omit_some_intersect_finitely_implies_constant} 
Let $\{H_{j}\}_{j=1}^{2t+1}$ be hyperplanes in $\mathbb{P}^n(\mathbb{C})$ located in $t$\,-\,subgeneral 
position. Let $\hat{i}$ be an integer with $t+1 \leq \hat{i} \leq 2t+1$ and let $f : \mathbb{C} \to \mathbb{P}^n(\mathbb{C})$ be a 
holomorphic map. Assume that
\begin{enumerate}
\item given any $j=1,2,\dots,\hat{i}$, either $f(\mathbb{C}) \subset H_j$ or $f$ omits $H_j$ and
\item given any $j=\hat{i}+1,\dots,2t+1$, $\langle \mathbf f,H_j\rangle$ has atmost finitely many zeros in $\mathbb{C}$
\end{enumerate}
then $f$ is constant.
\end{lemma}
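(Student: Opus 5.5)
The plan is to show first that $f$ is rational, and then that a nonconstant rational $f$ would force $t+1$ of the $H_j$ to share a common point, which $t$-subgeneral position forbids.

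\emph{Step 1: $f$ is rational.} This is the content of \cite[Lemma 3.7]{liu-ofommitcps-2016}, which I would invoke directly; if a self-contained argument is wanted, it goes as follows. Let $V\cong\mathbb P^k(\mathbb C)$ be the smallest linear subspace containing $f(\mathbb C)$, so that $f$ is linearly nondegenerate in $V$. Let $J=\{j\le \hat i : f(\mathbb C)\subset H_j\}$ and $s=|J|$; each $H_j$ with $j\in J$ contains $V$. If $s\ge t+1$, then $f(\mathbb C)$ lies in the intersection of $t+1$ of the hyperplanes, which is empty, a contradiction; hence $s\le t$. The remaining $q'=2t+1-s$ hyperplanes do not contain $V$ and restrict to hyperplanes of $V$. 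These restrictions are in $(t-s)$-subgeneral position in $V$: adjoining the $s$ hyperplanes indexed by $J$ to any $t+1-s$ of them gives $t+1$ hyperplanes with empty intersection. Suppose $k\ge1$. The Cartan--Nochka second main theorem then gives
\begin{equation*}
\bigl(q'-2(t-s)+k-1\bigr)\,T_f(r)\le \sum_{j\notin J} N^{[k]}_f(r,H_j)+o(T_f(r))
\end{equation*}
outside an exceptional set. The coefficient on the left is $s+k\ge 1$. On the right, every omitted hyperplane contributes $0$, and every $H_j$ with $j>\hat i$ contributes $O(\log r)$ because it has only finitely many zeros. Hence $T_f(r)=O(\log r)$, so $f$ is rational. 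If $k=0$, then $f$ is already constant.

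\emph{Step 2: a rational $f$ is constant.} Take a reduced representation $\mathbf f=(f_0,\dots,f_n)$ with polynomial entries having no common zero, and let $d=\max_k\deg f_k$. Suppose $d\ge1$, and let $f(\infty)=[c_0:\cdots:c_n]$, where $c_k$ is the coefficient of $z^d$ in $f_k$; this is a well-defined point of $\mathbb P^n(\mathbb C)$. Consider $j\le\hat i$. If $f$ omits $H_j$, then $\langle\mathbf f,H_j\rangle$ is a zero-free polynomial, hence a nonzero constant, so its $z^d$-coefficient $\sum_k a_kc_k$ vanishes and $f(\infty)\in H_j$. If instead $f(\mathbb C)\subset H_j$, then $f(\infty)\in H_j$ by continuity. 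So $f(\infty)$ lies in all of $H_1,\dots,H_{\hat i}$. Since $\hat i\ge t+1$, this contradicts $t$-subgeneral position. Therefore $d=0$, and $f$ is constant.

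The main obstacle is Step 1: correctly setting up the Nochka-type second main theorem on the linear span $V$, and checking that the degenerate hyperplanes (those containing $f(\mathbb C)$) are absorbed as described. Step 2 is elementary once rationality is known, and it is exactly where the hypothesis $\hat i\ge t+1$ is used.
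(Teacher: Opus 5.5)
Your proof is correct, and it follows the same two-step route the paper indicates: the paper does not prove this lemma itself but quotes it from Liu et al., where rationality comes first (their Lemma 3.7, which you either invoke or rederive via Cartan--Nochka on the linear span $V$, with coefficient $s+k\ge 1$) and constancy is deduced afterwards. Your constancy step is sound: a nonconstant reduced polynomial representation would put $f(\infty)$ in $H_1\cap\cdots\cap H_{\hat i}$, which violates $t$-subgeneral position because $\hat i\ge t+1$.
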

\section{Proof of the Main Results}

\begin{proof}[{\bf Proof of Theorem~\ref{T-ACFMHSO}}]
Suppose that the statement fails to hold for $z\in \mathbb D$ and $f\in\mathcal F$. Then there exist $\{f_{\nu}\}_{\nu=1}^\infty\subset \mathcal F$, $\{z_{\nu}\}_{\nu=1}^\infty\subset\mathbb D$ such that 
\begin{align*}
    \left(\frac{a-b\,|z_{\nu}|^2}{c+d\,|z_{\nu}|^2}\right)\|f_{\nu}'(z_{\nu})\|_{FS}\to\infty\ \text{ as }\ \nu\to\infty.\label{ACFMHSO.1} \tag{\ref*{T-ACFMHSO}.1}
\end{align*}
Thus \begin{align*}
    \left(\frac{\sqrt{a}-\sqrt{b}\,|z_{\nu}|}{c+d\,|z_{\nu}|^2}\right)\|f_{\nu}'(z_{\nu})\|_{FS}&=\left(\frac{a-b\,|z_{\nu}|^2}{(\sqrt{a}+\sqrt{b}\,|z_{\nu}|)(c+d\,|z_{\nu}|^2)}\right)\|f_{\nu}'(z_{\nu})\|_{FS}\\
    &\geq \frac{1}{\sqrt{a}+\sqrt{b}}\left(\frac{a-b|z_{\nu}|^2}{c+d|z_{\nu}|^2}\right)\|f_{\nu}'(z_{\nu})\|_{FS}.
\end{align*}
Hence \begin{align*}
\left(\frac{\sqrt{a}-\sqrt{b}\,|z_{\nu}|}{c+d\,|z_{\nu}|^2}\right)\|f_{\nu}'(z_{\nu})\|_{FS}\to\infty\ \text{ as }\ \nu\to\infty. \label{ACFMHSO.2} \tag{\ref*{T-ACFMHSO}.2} \end{align*}
Define $g_{\nu}(z):=f_{\nu}\left(z_{\nu}+\left(\frac{\sqrt{a}-\sqrt{b}\,|z_{\nu}|}{c+d\,|z_{\nu}|^2}\right)z\right)$, where $|z|<\dfrac{1-|z_{\nu}|}{\left(\frac{\sqrt{a}-\sqrt{b}\,|z_{\nu}|}{c+d\,|z_{\nu}|^2}\right)}$.
Then \begin{align*} \|g_{\nu}'(z)\|_{FS}&=\left\|f'_{\nu}\left(z_{\nu}+\left(\frac{\sqrt{a}-\sqrt{b}\,|z_{\nu}|}{c+d\,|z_{\nu}|^2}\right)z\right)\right\|_{FS} \left(\frac{\sqrt{a}-\sqrt{b}\,|z_{\nu}|}{c+d\,|z_{\nu}|^2}\right)
\end{align*}
    In particular for $z=0$, we have
    \begin{align*}
    \|g_{\nu}'(0)\|_{FS}&= \|f'_{\nu}(z_{\nu})\|_{FS} \left(\frac{\sqrt{a}-\sqrt{b}\,|z_{\nu}|}{c+d\,|z_{\nu}|^2}\right)\to\infty\ \text{ as }\ \nu\to\infty.
\end{align*}
Now, from Marty's theorem \cite[p. 4]{ere-nhcfprtps-2007}, it follows that $\{g_{\nu}\}$ is not normal at the origin.
By Lemma \ref{L-Thai_Zalcman_Projective}, there is a subsequence of $\{g_{\nu}\}$ with $w_{\nu}\to 0$ and $\rho_{\nu}\searrow 0$ such that
\begin{align*}
    G_{\nu}(\zeta):=&\,g_{\nu}(w_{\nu}+\rho_{\nu}\zeta)\\
    =&\,f_{\nu}\left(z_{\nu}+\left(\frac{\sqrt{a}-\sqrt{b}\,|z_{\nu}|}{c+d\,|z_{\nu}|^2}\right)w_{\nu}+\left(\frac{\sqrt{a}-\sqrt{b}\,|z_{\nu}|}{c+d\,|z_{\nu}|^2}\right)\rho_{\nu}\zeta\right)
\end{align*}
where $\zeta\in\mathbb C$ satisfies $w_{\nu}+\rho_{\nu}\zeta\in\mathbb D$, converges locally 
uniformly on $\mathbb C$ to a non-constant holomorphic curve $G(\zeta)$ from $\mathbb C$ to $\mathbb P^n(\mathbb C)$. 
Considering a subsequence, we assume that $z_{\nu}\to z_0$, where $|z_0|\leq 1$. We define,
$$\widetilde{z_{\nu}}=z_{\nu}+\left(\frac{\sqrt{a}-\sqrt{b}\,|z_{\nu}|}{c+d\,|z_{\nu}|^2}\right)w_{\nu}\ \text{ and }\ \widetilde{\rho_{\nu}}=\left(\frac{\sqrt{a}-\sqrt{b}\,|z_{\nu}|}{c+d\,|z_{\nu}|^2}\right)\rho_{\nu}.$$ 
It is clear that $\widetilde{z_{\nu}}\to z_0$ and $\widetilde\rho_{\nu}\to 0$ as $\nu\to\infty$.\smallskip

Consider the moving hypersurfaces $Q_i(z)$ for $i=1,2,\dots,2n+1$. Each $Q_i(z)$ is defined as,
 \begin{align*}Q_i(z)=&\left\{[u_0:u_1:\cdots:u_n]\in\mathbb P^n(\mathbb C):\sum_{I\in \tau_d} \alpha_I(z)\, u^I=0\right\}\\ =&~\{[u_0:u_1:\cdots:u_n]\in\mathbb P^n(\mathbb C):S_i(z,\mathbf{u})=0\},\end{align*}
where each coefficient $\alpha_I(z)$ is a holomorphic functions 
on $\mathbb{D}$ and not all $\alpha_I(z)$ vanish simultaneously for any $z\in \mathbb D$, $\tau_d = \left\{ (l_0,\ldots,l_n) \in \mathbb Z_{+}^{n+1} : \sum_{k=0}^n l_k= d \right\}$ 
and $u^I = \prod_{k=0}^nu_k^{l_k}$ for 
$I = (l_0,\ldots,l_n)$. Define the rescaled hypersurfaces $Q_{i,\nu}(\zeta):=Q_i(z_{\nu}+\rho_{\nu}\zeta)$. Since each $\alpha_I(z)$ are holomorphic and $z_{\nu}+\rho_{\nu}\zeta\to z_0$ locally uniformly on $\mathbb C$, 
we have $\alpha_I(z_{\nu}+\rho_{\nu}\zeta)\to \alpha_I(z_0)$ locally uniformly on $\mathbb C$. Hence $Q_{i,\nu}(\zeta)\to Q_i(z_0)$ locally uniformly. 
But each $f_{\nu}$ omits $Q_{i,\nu}(\zeta)$, so each $g_{\nu}$ omits $Q_{i,\nu}(\zeta)$ and thus each $G_{\nu}$ omits the hypersurface $Q_{i,\nu}(\zeta)$. \smallskip

If $z_0\in\mathbb D$, then for large $\nu$, $z_{\nu}\in D(z_0,r)$ for some $r>0$ with $D(z_0,r)\subset \mathbb D$. 
Now, $\widetilde{z_{\nu}}=z_{\nu}+\left(\frac{\sqrt{a}-\sqrt{b}\,|z_{\nu}|}{c+d\,|z_{\nu}|^2}\right)w_{\nu}$. As 
$w_{\nu}\to 0$ and $\left(\frac{\sqrt{a}-\sqrt{b}\,|z_{\nu}|}{c+d\,|z_{\nu}|^2}\right)$ stays bounded, we get 
$\widetilde{z_{\nu}}\to z_0\in\mathbb D$. Similarly $\widetilde \rho_{\nu}=\left(\frac{\sqrt{a}-\sqrt{b}\,|z_{\nu}|}{c+d\,|z_{\nu}|^2}\right)\rho_{\nu}\to 0$.
Thus for each compact set $K\subset\mathbb C$, for large $\nu$, $\widetilde z_{\nu} +\widetilde \rho_{\nu} K\subset D(z_0,r)\subset \mathbb D$. \smallskip

As each $f_{\nu}(\mathbb D)$ omits all hypersurface $Q_i(z_0)$, we have $S_i(z_0,\mathbf{f}_{\nu}(z))\neq 0$, for all 
$z\in\mathbb D$. Hence $S_i(z_0,\mathbf{G}_{\nu}(\zeta))\neq 0$, for all $\zeta\in\mathbb C$. Passing to a subsequence and applying Hurwitz's theorem,
\begin{itemize}
    \item $S_i(z_0,\mathbf{G})\equiv 0$ or,
    \item $S_i(z_0,\mathbf{G}(\zeta))\neq 0$ for all $\zeta\in\mathbb C$.
\end{itemize}
So, we have $G(\zeta)\in Q_i(z_0)$ for all $\zeta\in\mathbb C$ or $G(\zeta)\in\mathbb P^n(\mathbb C)\setminus Q_i(z_0)$
for all $\zeta\in\mathbb C$. %
\smallskip

Without loss of generality, assume there exists an integer $i_0$ such that $G(\zeta)\in\bigcap_{i=1}^{i_0} Q_i(z_0)$ for all 
$\zeta\in\mathbb C$ and $G(\zeta)\in\mathbb P^n(\mathbb C)\setminus \bigcup_{i=i_0+1}^{2n+1} Q_i(z_0)$, for all 
$\zeta\in\mathbb C$. Then, we get $G(\zeta)\in \bigcap_{i=1}^{i_0} Q_i(z_0)\setminus \bigcup_{i=i_0+1}^{2n+1} Q_i(z_0)$ 
for all $\zeta\in\mathbb C$. \smallskip

As we know, the hypersurfaces $\{Q_i(z)\}_{i=1}^{2n+1}$ are in pointwise general position in $\mathbb P^n(\mathbb C)$. Thus we
have $1\leq i_0\leq n$ and $M=\bigcap_{i=1}^{i_0} Q_i(z_0)$ is a closed subset of $\mathbb P^n(\mathbb C)$ with the property 
$$M\cap\left(\bigcap_{\hat{j}\in I} Q_{\hat{j}}(z_0)\right)=\varnothing,$$ for all $I\subset \{i_0+1,i_0+2,\dots, i_0+2(n-i_0)+1\}$ 
with $|I|=n+1-i_0$, (as $i_0+2(n-i_0)+1\leq 2n+1$).\smallskip

Thus the holomorphic curve $G:\mathbb C\to \mathbb P^n(\mathbb C)$ satisfies 
\begin{equation*}G(\zeta)\in M\setminus \bigcup_{\hat{j}=i_0+1}^{i_0+2(n-i_0)+1}Q_{\hat{j}}(z_0),\end{equation*}
for all $\zeta\in\mathbb C$. By Lemma \ref{L-Ere_omitting_hypersurface_general_position_implies_constant}, 
$G$ is a constant holomorphic curve from $\mathbb C$ to $\mathbb P^n(\mathbb C)$, which is a contradiction. \smallskip

If $z_0\in\partial\mathbb D$, as $z_{\nu}\to z_0$ so $|z_{\nu}|\to 1$. Recall 
$\widetilde{z_{\nu}}=z_{\nu}+\left(\frac{\sqrt{a}-\sqrt{b}\,|z_{\nu}|}{c+d\,|z_{\nu}|^2}\right)w_{\nu}$ and 
$\widetilde \rho_{\nu}=\left(\frac{\sqrt{a}-\sqrt{b}\,|z_{\nu}|}{c+d\,|z_{\nu}|^2}\right)\rho_{\nu}$. As $w_{\nu}\to 0$, we
still have $\widetilde{z_{\nu}}\to z_0\in\partial\mathbb D$. Thus for any compact set $K$ inside $\mathbb C$, 
$\widetilde{z_{\nu}}+\widetilde{\rho_{\nu}}K=z_{\nu}+\left(\frac{\sqrt{a}-\sqrt{b}\,|z_{\nu}|}{c+d\,|z_{\nu}|^2}\right)(w_{\nu}+\rho_{\nu} K)\subset\mathbb D$, 
for large $\nu$ and again we will be evaluating $G_{\nu}$, so the same procedure appears as above and $G$ turns out to 
constant, which is a contradiction. 
\end{proof}
\bigskip

\begin{proof}[{\bf Proof of Theorem~\ref{T-ACFFHPS}}]
Suppose that the statement fails to hold for $z\in \mathbb D$ and $f\in\mathcal F$. Then there exist $\{f_{\nu}\}_{\nu=1}^\infty\subset \mathcal F$, $\{z_{\nu}\}_{\nu=1}^\infty\subset\mathbb D$ such that 
\begin{align*}
    \left(\frac{a-b\,|z_{\nu}|^2}{c+d\,|z_{\nu}|^2}\right)\|f_{\nu}'(z)\|_{FS}\to\infty\ \text{ as }\ \nu\to\infty.\label{ACFFHPS.1} \tag{\ref*{T-ACFFHPS}.1}
\end{align*}
Thus \begin{align*}
    \left(\frac{\sqrt{a}-\sqrt{b}\,|z_{\nu}|}{c+d\,|z_{\nu}|^2}\right)\|f_{\nu}'(z_{\nu})\|_{FS}&=\left(\frac{a-b\,|z_{\nu}|^2}{(\sqrt{a}+\sqrt{b}\,|z_{\nu}|)(c+d\,|z_{\nu}|^2)}\right)\|f_{\nu}'(z_{\nu})\|_{FS}\\
    &\geq \frac{1}{\sqrt{a}+\sqrt{b}}\left(\frac{a-b|z_{\nu}|^2}{c+d|z_{\nu}|^2}\right)\|f_{\nu}'(z_{\nu})\|_{FS}.
\end{align*}
Hence \begin{align*}
\left(\frac{\sqrt{a}-\sqrt{b}\,|z_{\nu}|}{c+d\,|z_{\nu}|^2}\right)\|f_{\nu}'(z_{\nu})\|_{FS}\to\infty\ \text{ as }\ \nu\to\infty. \label{ACFFHPS.2} \tag{\ref*{T-ACFFHPS}.2} \end{align*}
Define $g_{\nu}(z):=f_{\nu}\left(z_{\nu}+\left(\frac{\sqrt{a}-\sqrt{b}\,|z_{\nu}|}{c+d\,|z_{\nu}|^2}\right)z\right)$, where $|z|<\dfrac{1-|z_{\nu}|}{\left(\frac{\sqrt{a}-\sqrt{b}\,|z_{\nu}|}{c+d\,|z_{\nu}|^2}\right)}$.
Then \begin{align*} \|g_{\nu}'(z)\|_{FS}&=\left(\frac{\sqrt{a}-\sqrt{b}\,|z_{\nu}|}{c+d\,|z_{\nu}|^2}\right)\left\|f'_{\nu}\left(z_{\nu}+\left(\frac{\sqrt{a}-\sqrt{b}\,|z_{\nu}|}{c+d\,|z_{\nu}|^2}\right)z\right)\right\|_{FS} 
\end{align*}
    In particular for $z=0$, we have
    \begin{align*}
    \|g_{\nu}'(0)\|_{FS}&= \left(\frac{\sqrt{a}-\sqrt{b}\,|z_{\nu}|}{c+d\,|z_{\nu}|^2}\right)\|f'_{\nu}(z_{\nu})\|_{FS} \to\infty\ \text{ as }\ \nu\to\infty.
\end{align*}
Now, from Marty's criterion \cite[p. 4]{ere-nhcfprtps-2007}, it follows that $\{g_{\nu}\}$ is not normal at origin. By Lemma 
\ref{L-Thai_Zalcman_Projective}, there is a subsequence of $\{g_{\nu}\}$ with $w_{\nu}\to 0$ and $\rho_{\nu}\searrow 0$
such that
\begin{align*}
    G_{\nu}(\zeta):=&\, g_{\nu}(w_{\nu}+\rho_{\nu}\zeta)\\
    =&\,f_{\nu}\left(z_{\nu}+\left(\frac{\sqrt{a}-\sqrt{b}\,|z_{\nu}|}{c+d\,|z_{\nu}|^2}\right)w_{\nu}+\left(\frac{\sqrt{a}-\sqrt{b}\,|z_{\nu}|}{c+d\,|z_{\nu}|^2}\right)\rho_{\nu}\zeta\right)
\end{align*}
where $\zeta\in\mathbb C$ satisfies $w_{\nu}+\rho_{\nu}\zeta\in\mathbb D$, converges locally 
uniformly on $\mathbb C$ to a non-constant holomorphic curve $G(\zeta)$ from $\mathbb C$ to $\mathbb P^n(\mathbb C)$. 
Considering a subsequence, we may now assume that $z_{\nu}\to z_0$, where $|z_0|\leq 1$. We define,
$$\widetilde{z_{\nu}}=z_{\nu}+\left(\frac{\sqrt{a}-\sqrt{b}\,|z_{\nu}|}{c+d\,|z_{\nu}|^2}\right)w_{\nu}\ \text{ and }\ \widetilde{\rho_{\nu}}=\left(\frac{\sqrt{a}-\sqrt{b}\,|z_{\nu}|}{c+d\,|z_{\nu}|^2}\right)\rho_{\nu}.$$ 
It is clear that $\widetilde{z_{\nu}}\to z_0$ and $\widetilde\rho_{\nu}\to 0$ as $\nu\to\infty$.\smallskip

Consider the hyperplanes $H_i$ for $i=1,2,\dots,2n+1$. Each $H_i$ is defined as,
\begin{align*}H_i=&\left\{[u_0:u_1:\cdots:u_n]\in\mathbb P^n(\mathbb C):\sum_{i=0}^n \alpha_I\,u^I=0\right\}\\ =&~\{[u_0:u_1:\cdots:u_n]\in\mathbb P^n(\mathbb C):L_i(\mathbf u)=0\}.\end{align*}

As given any $f,g\in \mathcal F$, $f$ and $g$ shares $H_i$, for all $i=1,2,\dots,2n+1$. Let 
$\{f_{\nu}\}_{\nu=1}^{\infty}\subset\mathcal F$. Then there exists $E_i\subset \mathbb D$, 
for all $i=1,2,\dots,2n+1$, $f_{\nu}^{-1}(H_i)=E_i$. \smallskip

If $E_i=\varnothing$, then $f_{\nu}^{-1}(H_i)=\varnothing$, for each $f_{\nu}\in\mathcal F$ and for all $i=1,2,\dots,2n+1$. If 
$z_0\in\mathbb D$, then for large $\nu$, $z_{\nu}\in D(z_0,r)$ for some $r>0$ with $D(z_0,r)\subset\mathbb D$. Now, 
$\widetilde{z_{\nu}}=z_{\nu}+\left(\frac{\sqrt{a}-\sqrt{b}\,|z_{\nu}|}{c+d\,|z_{\nu}|^2}\right)w_{\nu}$. As 
$w_{\nu}\to 0$ and $\left(\frac{\sqrt{a}-\sqrt{b}\,|z_{\nu}|}{c+d\,|z_{\nu}|^2}\right)$ stays bounded, we get 
$\widetilde{z_{\nu}}\to z_0\in\mathbb D$. Similarly $\widetilde \rho_{\nu}=\left(\frac{\sqrt{a}-\sqrt{b}\,|z_{\nu}|}{c+d\,|z_{\nu}|^2}\right)\rho_{\nu}\to 0$. 
Thus for each compact set $K\subset\mathbb C$, for large $\nu$, $\widetilde z_{\nu} +\widetilde \rho_{\nu} K\subset D(z_0,r)\subset \mathbb D$. \smallskip

As each $f_{\nu}(\mathbb D)$ omits all hyperplanes $H_i$, so $L_i(\mathbf{f}_{\nu}(z))\neq 0$, for all $z\in\mathbb D$. Hence
$L_i(\mathbf{G}_{\nu}(\zeta))\neq 0$, for all $\zeta\in\mathbb C$. Passing it to the limit and applying Hurwitz theorem,
\begin{itemize}
    \item $L_i(\mathbf{G}(\zeta))\equiv 0$ or,
    \item $L_i(\mathbf{G}(\zeta))\neq 0$ for all $\zeta\in\mathbb C$.
\end{itemize}

If $L_i(\mathbf{G}(\zeta))\equiv 0$ holds then there exists $i$ such that $G(\mathbb C)\subset H_i$ as 
the hyperplanes are in general position, their intersections with $H_i$ 
remains hyperplane in $H_i\equiv\mathbb P^{n-1}(\mathbb C)$. As there are $2n+1$ hyperplanes, there are atleast $2n$ hyperplanes in general position. Now observe 
$2n=2(n-1)+2$. So inductively, inside $H_i$, we still have atleast $2(n-1)+1$ hyperplanes. Thus $G$ must be constant. But by Lemma \ref{L-Thai_Zalcman_Projective}, 
$\|G'(0)\|_{FS}=1$, which is a contradiction. Hence $G(\mathbb C)\cap H_i=\varnothing$, for all $i=1,2,\dots,2n+1$.
Thus $G(\mathbb C)\subset \mathbb P^n(\mathbb C)\setminus \bigcup_{i=1}^{2n+1}H_i$. Also by Lemma \ref{L-Ere_omitting_hypersurface_general_position_implies_constant}, we get $G$ is constant, which is a contradiction. \smallskip

If $z_0\in\partial\mathbb D$, as $z_{\nu}\to z_0$, so $|z_{\nu}|\to 1$. Recall 
$\widetilde{z_{\nu}}=z_{\nu}+\left(\frac{\sqrt{a}-\sqrt{b}\,|z_{\nu}|}{c+d\,|z_{\nu}|^2}\right)w_{\nu}$ and 
$\widetilde \rho_{\nu}=\left(\frac{\sqrt{a}-\sqrt{b}\,|z_{\nu}|}{c+d\,|z_{\nu}|^2}\right)\rho_{\nu}$. As $w_{\nu}\to 0$, we 
still have $\widetilde{z_{\nu}}\to z_0\in\partial\mathbb D$. Thus for any compact set $K$ inside $\mathbb C$, 
$\widetilde{z_{\nu}}+\widetilde{\rho_{\nu}}K=z_{\nu}+\left(\frac{\sqrt{a}-\sqrt{b}\,|z_{\nu}|}{c+d\,|z_{\nu}|^2}\right)(w_{\nu}+\rho_{\nu} K)\subset\mathbb D$, 
the same procedure appears as above and $G$ turns out to 
constant by Lemma \ref{L-Ere_omitting_hypersurface_general_position_implies_constant}, which is again a contradiction. \smallskip

If $E_i\neq\varnothing$, then set $E_i=\{z\in\mathbb D:\langle\mathbf{f}_{\nu}(z),H_i\rangle=0\}$. Also here $E_i$'s
are discrete in $\mathbb D$, 
so given any compact set $K\subset \mathbb C$, the functions 
$\langle\mathbf{G}_{\nu}(\zeta),H_i\rangle$ have only finitely many zeros in $K$, for large $\nu$. \smallskip

Since $G_{\nu}\to G$ locally uniformly, we have $\langle\mathbf{G}_{\nu}(\zeta),H_i\rangle\to \langle\mathbf{G}(\zeta),H_i\rangle$ locally uniformly
on $\mathbb C$. Similarly, if $z_0\in\mathbb D$, then for each compact set $K\subset\mathbb C$, for large $\nu$,
$\widetilde z_{\nu} +\widetilde \rho_{\nu} K\subset D(z_0,r)\subset \mathbb D$.
Thus passing through a subsequence and applying Hurwitz's theorem, 
\begin{itemize}
    \item $G(\mathbb C)\subset H_i$ or,
    \item $\langle\mathbf{G}(\zeta),H_i\rangle$ has atmost finitely many zeros in $\mathbb C$.
\end{itemize}

Let $I=\{i:G(\mathbb C)\subset H_i\}$, $\widetilde{i}=|I|$. Since the hyperplanes are in general position in $\mathbb P^n(\mathbb C)$.
Thus atmost $n$ of them can have a common point. So $\widetilde{i}\leq n$. Define $J$ as the complement of the set $I$. Then $|J|=2n+1-\widetilde{i}\geq n+1$, for all 
$i\in J$, $\langle \mathbf{G}(\zeta),H_i\rangle$ has finitely many zeros. By Lemma 
\ref{L-Liu_hyperplanes_subgeneral_position_some_omit_some_intersect_finitely_implies_constant}, $G$ turns out to be a constant which is a contradiction. \smallskip

If $z_0\in\partial\mathbb D$, then again for any compact set $K$ inside $\mathbb C$, $$\widetilde{z_{\nu}}+\widetilde{\rho_{\nu}}K=z_{\nu}+\left(\frac{\sqrt{a}-\sqrt{b}\,|z_{\nu}|}{c+d\,|z_{\nu}|^2}\right)(w_{\nu}+\rho_{\nu} K)\subset\mathbb D$$ for large $\nu$ as above $G$ turns out to constant by Lemma
\ref{L-Liu_hyperplanes_subgeneral_position_some_omit_some_intersect_finitely_implies_constant}, 
which is again a contradiction. 
\end{proof}
\section*{Acknowledgement}

The authors would like to thank Prof. Sanjay Kumar Pant for valuable discussions and suggestions.
\bibliographystyle{amsplain}

\end{document}